\author[Mikl\'os B\'ona]{Mikl\'os B\'ona\affiliationmark{1}\thanks{Partially supported by  Simons Foundation Collaboration Grant 940024}  }
\title[Stack-sorting preimages and 0-1 trees]{Stack-sorting preimages and 0-1 trees}
\affiliation{
  Uninversity of Florida}
\keywords{permutations, stack sorting, partially labeled trees, Boolean-Catalan numbers.}
\newtheorem{theorem}{Theorem}[section]
\newtheorem{proposition}[theorem]{Proposition}
\newtheorem{example}[theorem]{Example}
\newcommand{\av}{\hbox{Av}}
\newtheorem{definition}[theorem]{Definition}
\begin{document}

\publicationdata{vol. 28:1, Permutation Patterns 2025}{2026}{1}{10.46298/dmtcs.16393}{2025-08-22; 2025-08-22; 2026-01-23; 2026-02-27}{2026-02-28}
\maketitle
\begin{abstract}
We define a class of partially labeled trees and use them to find simple proofs for two recent enumeration results of Colin Defant concerning stack-sorting preimages of permutation classes.
\end{abstract}

\section{Introduction}
{\em Stack sorting} or {\em West stack sorting} of permutations is an interesting and well-studied field of research. See Chapter 8 of \cite{b22} for an introduction. 
The stack sorting operation $s$ has three natural,  equivalent definitions. Here we mention one of them. We define $s$ on all finite permutations $p$ {\em and their substrings} by
setting $s(\varepsilon)=\varepsilon$ for the empty permutation, and setting $s(p)=s(LnR)=s(L)s(R)n$, where $n$ is the maximum entry of the string $p$, while $L$ is the substring of entries
on the left of $n$, and $R$ is the substring on the entries on the right of $n$. Note that $L$ and $R$ can be empty. 

The questions, methods and results about the stack sorting algorithm are intrinsically connected to the theory of pattern-avoiding permutations, a large and rapidly evolving field. 
 We say that a permutation $p$ {\em contains} the pattern (or subsequence) $q=q_1q_2\cdots q_k$ 
if there is a $k$-element set of indices $i_1<i_2< \cdots <i_k$ such that $p_{i_r} < p_{i_s} $ if and only
if $q_r<q_s$.  If $p$ does not contain $q$, then we say that $p$ {\em avoids} $q$. For example, $p=3752416$ contains
$q=2413$, as the first, second, fourth, and seventh entries of $p$ form the subsequence 3726, which is order-isomorphic
to $q=2413$.  A recent survey on permutation 
patterns by Vatter can be found in \cite{v15}.

Recently, in a series of papers \cite{aust,lothar,enum} Colin Defant studied the size of $s^{-1}(Class_n)$, that is, he studied the number of permutations $p$ of length $n$
for which $s(p)$ belongs to a certain permutation class.  Let $\av_n(q)$ denote the set of permutations of length $n$ that avoid the pattern $q$. The classic result that the number
of permutations $p$ of length $n$ for which $s(p)$ is the identity permutation is the Catalan number $C_n$  is a special case of this line of work.  Namely, it is the result that
$|s^{-1}(\av_n(21))|=C_n$. Considering various permutation classes defined by sets of patterns of length three,  Defant has proved the chain
of equalities

\begin{eqnarray*}\sum_{n\geq 1} |s^{-1}(\av_n(132,312))| z^n & = &  \sum_{n\geq 1} |s^{-1}(\av_n(231,312))| z^n \\ = \sum_{n\geq 1} |s^{-1}(\av_n(132,231))|z^n 
& = & \frac{1-2z-\sqrt{1-4z-4z^2}}{4z} .\end{eqnarray*}

This was not an easy feat. The enumeration formula for the first set of permutations was proved in \cite{lothar}, the formula for the second set was proved in \cite{aust}, and the formula for the third one was proved in \cite{enum}, and the techniques used in these proofs were also different. 

In this paper, we give simple, but perhaps interesting proofs for the above enumeration result for $|s^{-1}(\av_n(132,312))| $ and $ |s^{-1}(\av_n(231,312))|$.
A key element in a proof will be the notion of a certain kind of partially labeled trees that we call $(0,1)$-trees. We will show that the numbers counting our permutations 
satisfy the same recurrence relation as these trees.

\section{0-1-Trees}
A {\em plane binary tree} is a rooted plane tree on unlabeled vertices in which every non-leaf vertex has one or two children, and every child is a left child or a right child, even if it is the
only child of its parent. It is well-known that the number of such trees on $n$ vertices is the $n$th Catalan number $C_n={2n\choose n}/(n+1)$. We will consider an enhanced version
of these trees. 

\begin{definition} A {\em 0-1-tree} is a binary plane tree in which every vertex that has two children is labeled 0 or 1. \end{definition}

Figure 1 shows the six 0-1-trees on three vertices. 
\begin{figure}[htb] \label{trees-0-1}
\begin{center}\includegraphics[width=6cm]{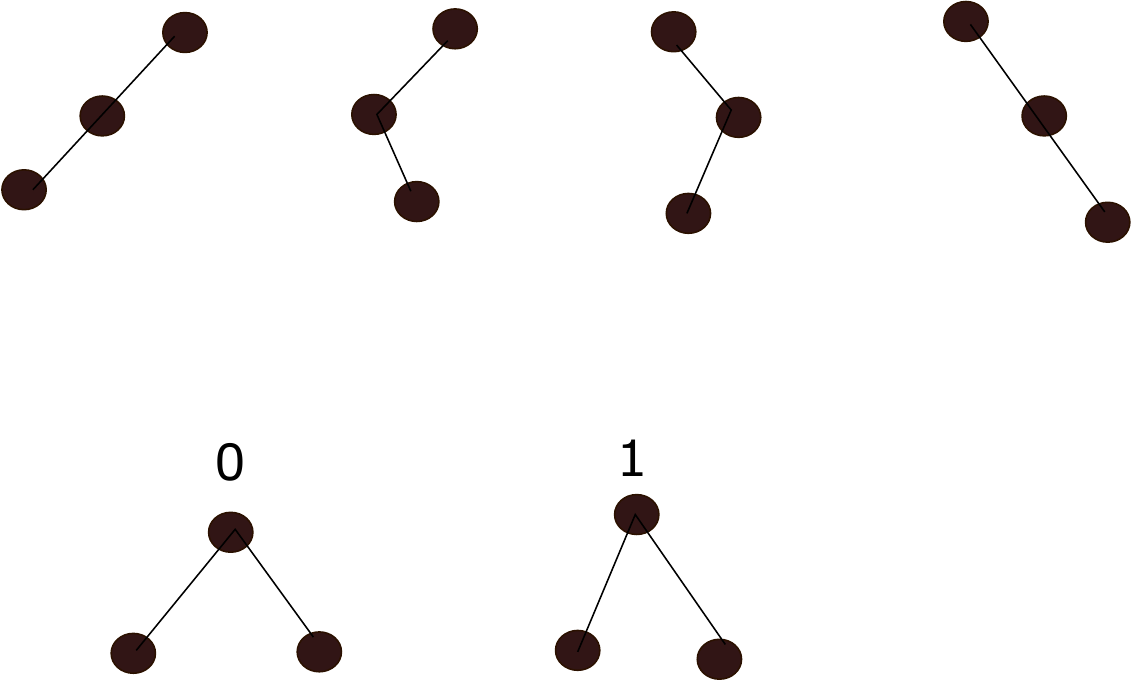}
\caption{The six 0-1 trees on three vertices.}
\end{center} \end{figure}

Let $a_n$ be the number of 0-1-trees on $n$ vertices, and let us set $a_0=0$. 
\begin{proposition}
Let $A(z)=\sum_{n\geq 0}a_nz^n$ be the ordinary generating function of the sequence $\{a_n\}_{n\geq 0}$. Then
\[A(z)=\frac{1-2z-\sqrt{1-4z-4z^2}}{4z} .\]
\end{proposition}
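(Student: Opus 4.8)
The plan is to derive a functional equation for $A(z)$ by decomposing a 0-1-tree according to its root, then solve the resulting quadratic and select the branch consistent with $a_0=0$. First I would classify the 0-1-trees on $n\geq 1$ vertices by the number of children of the root. If the root is a leaf, the tree is the single vertex, contributing $z$. If the root has exactly one child, that child is either a left child or a right child (two choices, per the plane convention), and the subtree hanging below it is an arbitrary nonempty 0-1-tree; since a vertex with a single child carries no label, this case contributes $2zA(z)$. If the root has two children, it must be labeled $0$ or $1$ (two more choices), and its left and right subtrees are each arbitrary nonempty 0-1-trees, contributing $2zA(z)^2$. Collecting terms gives the functional equation
\[A(z)=z+2zA(z)+2zA(z)^2.\]
I would sanity-check this against small values, confirming $a_1=1$, $a_2=2$, and $a_3=6$, the last matching the six trees in Figure~1.

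Next I would solve this quadratic in $A(z)$. Rewriting it as $2zA^2+(2z-1)A+z=0$ and applying the quadratic formula yields
\[A(z)=\frac{(1-2z)\pm\sqrt{(1-2z)^2-8z^2}}{4z},\]
and a short computation simplifies the discriminant, since $(1-2z)^2-8z^2=4z^2-4z+1-8z^2=1-4z-4z^2$.

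Finally, I would pin down the sign. Because $a_0=0$ we need $A(z)\to 0$ as $z\to 0$: the plus sign forces $A$ to blow up like $1/(2z)$, whereas the minus sign gives a removable singularity. Expanding $\sqrt{1-4z-4z^2}=1-2z-4z^2-\cdots$ shows that with the minus sign the numerator is $4z^2+O(z^3)$, so $A(z)=z+O(z^2)$, consistent with $a_1=1$. This selects the minus branch and delivers the claimed formula. The only real subtlety is the bookkeeping in the root decomposition: one must remember the factor of $2$ for the left/right choice when the root has a single child, which is separate from the factor of $2$ coming from the $0/1$ label when the root has two children. Conflating or omitting either factor gives the wrong equation; once the functional equation is correct, everything that follows is routine algebra.
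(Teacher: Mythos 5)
Your proof is correct and follows essentially the same route as the paper: a root decomposition (single vertex, unlabeled root with one left or right child, or labeled root with two children) yielding the functional equation $A(z)=z+2zA(z)+2zA(z)^2$, which is then solved by the quadratic formula with the branch fixed by the initial condition $A(0)=0$. The only cosmetic difference is that the paper first records the coefficient recurrence $a_n=2a_{n-1}+2\sum_{i=2}^{n-1}a_{i-1}a_{n-i}$ before passing to the generating function, while you work with $A(z)$ directly and carry out the branch selection in more explicit detail.
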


\begin{proof}
The root of a 0-1-tree that consists of more than one vertex either has a left child only, or a right child only, or two children. It is only in the last case that the root will get a label. This implies
 the recurrence relation
\begin{equation} \label{recurrence} a_n=2a_{n-1}+2\sum_{i=2}^{n-1} a_{i-1}a_{n-i}, \end{equation} for $n\geq 2$ with $a_1=1$, 
or, equivalently, the functional equation
\[A(z)=z+2zA(z)+2zA^2(z),\] with $A(0)=0$. Solving this quadratic equation and choosing the solution that satisfies the initial condition yields our result.
\end{proof}

The sequence $\{a_n\}_{n\geq 1}$ appears in \cite{sloane} as sequence A071356, with the indices shifted by 1. In \cite{hossain}, Chetak Hossain suggests the name {\em Boolean-Catalan
numbers} for the numbers $a_n$. Our notion of (0-1)-trees supports this suggestion. Indeed, we take a structure counted by the Catalan numbers, namely the binary trees, and 
enhance it by a "Boolean" structure, namely zeros and ones.

\section{Determining $|s^{-1}(\av_n(132,312))|$.}

Let us set $b_n=|s^{-1}(\av_n(132,312))|$ for ease of notation. Set $b_0=0$.
\begin{theorem} The sequence $b_n$ satisfies the recurrence relation (\ref{recurrence}) that is satisfied by the sequence $a_n$. As $a_1=b_1=1$, this implies that $a_n=b_n$ for all $n$.
\end{theorem}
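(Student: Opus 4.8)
The plan is to prove the recurrence $b_n = 2b_{n-1} + 2\sum_{i=2}^{n-1} b_{i-1}b_{n-i}$ directly, by decomposing $s^{-1}(\av_n(132,312))$ according to the position of the largest entry. First I would record the shape of the target class: a permutation lies in $\av(132,312)$ exactly when, read left to right, every entry is either larger than all previous entries or smaller than all of them. Equivalently, the values occupying any prefix form an interval, with each new entry being one more than the current maximum or one less than the current minimum. (This last point is forced: if a new running maximum skipped a value, that skipped value could later be neither a running maximum nor a running minimum.) Writing $p=LnR$, we have $s(p)=s(L)s(R)n$, and since $n$ is both last and the global maximum it can only play the role of the largest element of a would-be pattern; but in $132$ and $312$ the largest entry is not last, so $n$ never participates in an occurrence. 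Hence $s(p)\in\av_n(132,312)$ iff $s(L)s(R)\in\av_{n-1}(132,312)$. When $R$ or $L$ is empty this reduces to $s(L)\in\av$ (resp.\ $s(R)\in\av$); since $s$ commutes with standardization, each of these two cases contributes $b_{n-1}$, giving the term $2b_{n-1}$.

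Next I would treat the main case, $L$ and $R$ both nonempty with $|L|=\ell$ and $|R|=r=n-1-\ell$. Because $s(L)s(R)$ is a running-max/min permutation of $[n-1]$, its length-$\ell$ prefix $s(L)$ must itself be one, and by the interval property its value set is some block $[c+1,c+\ell]$ with $0\le c\le r$. Thus the value set of $L$ is this block, and its arrangement may be any of the $b_\ell$ patterns with $s(L)\in\av_\ell$. Given such an $L$, the suffix $s(R)$ must extend the interval in both directions: every value of $R$ below the block must appear in $s(R)$ in \emph{decreasing} order and every value above it in \emph{increasing} order, with the two strands interleaved arbitrarily. Crucially this condition depends only on $c$, not on the chosen $L$, so the two choices are independent; the number of admissible arrangements of $R$ for a given $c$ is $|s^{-1}(\mathcal{T}_{c,r})|$, where $\mathcal{T}_{c,r}$ is the set of permutations $\tau$ of $[r]$ in which $1,\dots,c$ appear in decreasing order and $c+1,\dots,r$ in increasing order.

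Finally I would sum over $c$. Exchanging the order of summation gives $\sum_{c=0}^{r}|s^{-1}(\mathcal{T}_{c,r})| = \sum_{\tau}|s^{-1}(\tau)|\,g(\tau)$, where $g(\tau)$ counts the cut points $c$ with $\tau\in\mathcal{T}_{c,r}$. The key lemma is that $g(\tau)=2$ for every $\tau\in\av_r(132,312)$ (and $g(\tau)=0$ otherwise): in such a $\tau$ the running minima are exactly the values $1,\dots,\tau_1$, occurring decreasingly, and the running maxima are exactly $\tau_1,\dots,r$, occurring increasingly, so a cut $c$ works iff $c\in\{\tau_1-1,\tau_1\}$. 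Therefore the inner sum is $2\sum_{\tau}|s^{-1}(\tau)| = 2b_r$, the count for fixed $\ell$ is $2b_\ell b_r$, and summing over $\ell$ yields the convolution term $2\sum_{i=2}^{n-1}b_{i-1}b_{n-i}$; adding the $2b_{n-1}$ term gives exactly the recurrence, and $a_1=b_1=1$ then forces $a_n=b_n$. The hardest part will be the two structural facts underpinning everything — that prefixes of a running-max/min permutation occupy an interval, and the key lemma $g(\tau)=2$ — since once these are secured the independence of the $L$- and $R$-choices and the clean factor of $2$ follow immediately.
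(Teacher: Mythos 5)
Your proof is correct and follows essentially the same route as the paper: the same decomposition $p=LnR$ by the position of $n$, the same running-maximum/minimum characterization of $\av(132,312)$, and your key lemma $g(\tau)=2$ (the two admissible cuts $c\in\{\tau_1-1,\tau_1\}$) is precisely the paper's observation that the leftmost entry of $s(R)$ can be placed either below or above all entries of $L$, which is where its factor of $2$ in the convolution term comes from. The only difference is bookkeeping: you organize the count as a sum over block offsets $c$ with an exchange of summation, whereas the paper states directly that each ordered pair of patterns $(\ell,r)$ lifts to exactly two permutations.
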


For the rest of this paper, for a permutation $p$ of length $n$, we use the notation $p=LnR$, where $L$ and $R$ denote the possibly empty string of entries on the left of the entry $n$
and on the right of the entry $n$. We recall the crucial identity $s(p)=s(L)s(R)n$. 

A {\em left-to-right maximum} (resp. minimum) in a permutation is an entry that is larger (resp. smaller) than everything on its left. 
For shortness, we will call left-to-right maxima simply {\em maxima}, and left-to-right minima simply {\em minima}. 

\begin{proof}
Intuitively speaking, we will show that there are two ways to combine two shorter permutations of the desired kind into one, just like there are two ways to join two smaller
(0-1)-trees to a common root. In order to make this idea more precise, we have to analyze the structure of our permutations. 

 If a permutation avoids 132 and 312, then it ends with $1$ or $n$, and in general, moving right to left,
 each entry is a left-to-right minimum or left-to-right maximum, and only one of those, {\em except} the leftmost entry, which is both. (This is also an easy way to see that
 $|\av_n(132,312))| =2^{n-1}$.)  See Figure 2 for an illustration.

\begin{figure}[htb] \label{132-312-perm}
\begin{center}\includegraphics[width=6cm]{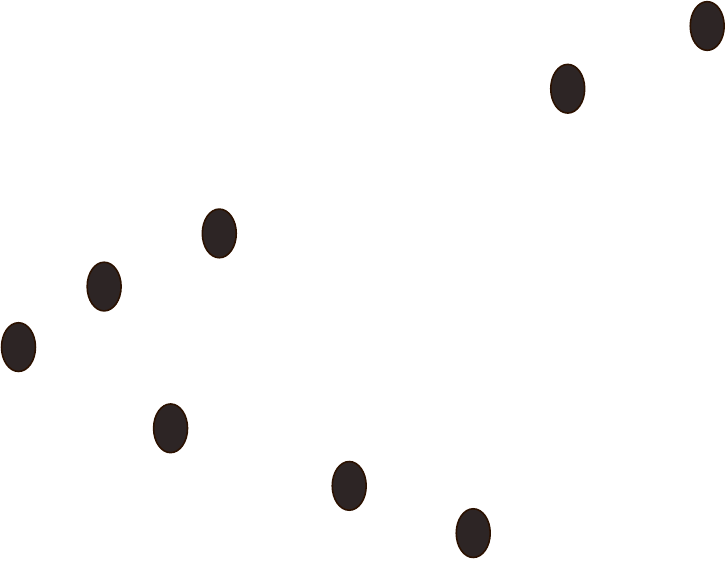}
\caption{A generic permutation avoiding 132 and 312.}
\end{center} \end{figure}

 Now let $p=LnR$. It follows from the description in the previous paragraph that then $s(p)=s(L)s(R)n$ avoids both 132 and 312 if and only if $s(L)$ and $s(R)$ avoid these two patterns, the  maxima of $s(L)$ are smaller than the maxima of $s(R)$ and the minima of $s(L)$ are larger than the minima of $s(R)$. In other words, {\em all} the entries in $L$ are smaller than
 the maxima in $s(R)$ and
larger than the minima in $s(R)$. (Note that the set of entries in $L$ is the same as the set of entries in $s(L)$.) This means that for each ordered pair  $(\ell,r)$ of nonempty patterns for which $s(\ell)$ and $s(r)$ avoid 132 and 312 and $|\ell|+|r|=n-1$, there are exactly
{\em two}  permutations $p\in s^{-1}(\av_n(132,312))$ so that $\ell$ is isomorphic to the string $L$ of entries preceding $n$ in $p$ and $r$ is isomorphic to the string $R$ of entries following $n$ in $p$. This is  because the leftmost entry of $s(R)$ can be viewed as a left-to-right minimum or as a left-to-right maximum in $s(R)$, hence its value in $p$ can be chosen to be smaller than
all entries in $L$, or larger than all entries in $L$. See Example \ref{twochoices} and Figure \ref{twocases} for illustrations. Once that choice is made, $p=LnR$ is determined, since $L$ is isomorphic to $\ell$, and $R$ is isomorphic to $r$.
This argument is valid as long as $L$ and $R$ are not empty, so when $n$ is in position $i\in [2,n-1]$. This case contributes $2\sum_{i=2}^{n-1} b_{i-1}b_{n-i}$ permutations to 
$b_n$.

Finally, it is clear that if $p'\in s^{-1}(\av_{n-1}(132,312))$, then prepending or postpending $p'$ with $n$, we get a permutation $p\in  s^{-1}(\av_{n-1}(132,312))$. This case contributes $2b_{n-1}$ permutations to $b_n$, completing the proof. 
\end{proof}

\begin{example} \label{twochoices}
Let $s(\ell) =2134$ and let $s(r)=32145$.  Then the entries 4 and 5 are maxima of $s(r)$, the entries  2 and 1 are minima of $s(r)$, and the entry 3 is both. Therefore, we have two possibilities.
\begin{enumerate}
\item We can treat 3 as a minimum, and obtain the strings $s(L)=5467$ and $s(R)=32189$, leading to the permutation $s(p)=5\ 4 \ 6 \ 7 \ 3\ 2\  1\  8 \ 9\ 10$, or 
\item we can treat 3 as a maximum, and obtain the strings $s(L)=4356$ and $s(R)=72189$, leading to the permutation  $s(p)=4\ 3 \ 5\ 6 \ 7\ 2 \ 1\  8\ 9  \ 10$.
\end{enumerate}
Note that in the first case, the leftmost entry of $s(R)$ is smaller than all entries in $s(L)$ (and so all entries in $L$); in the second case, the leftmost entry of $s(R)$ is larger than all entries in $L$. See Figure \ref{twocases} for an illustration.
\end{example}

\begin{figure}[htb] \label{twocases}
\begin{center}\includegraphics[width=7cm]{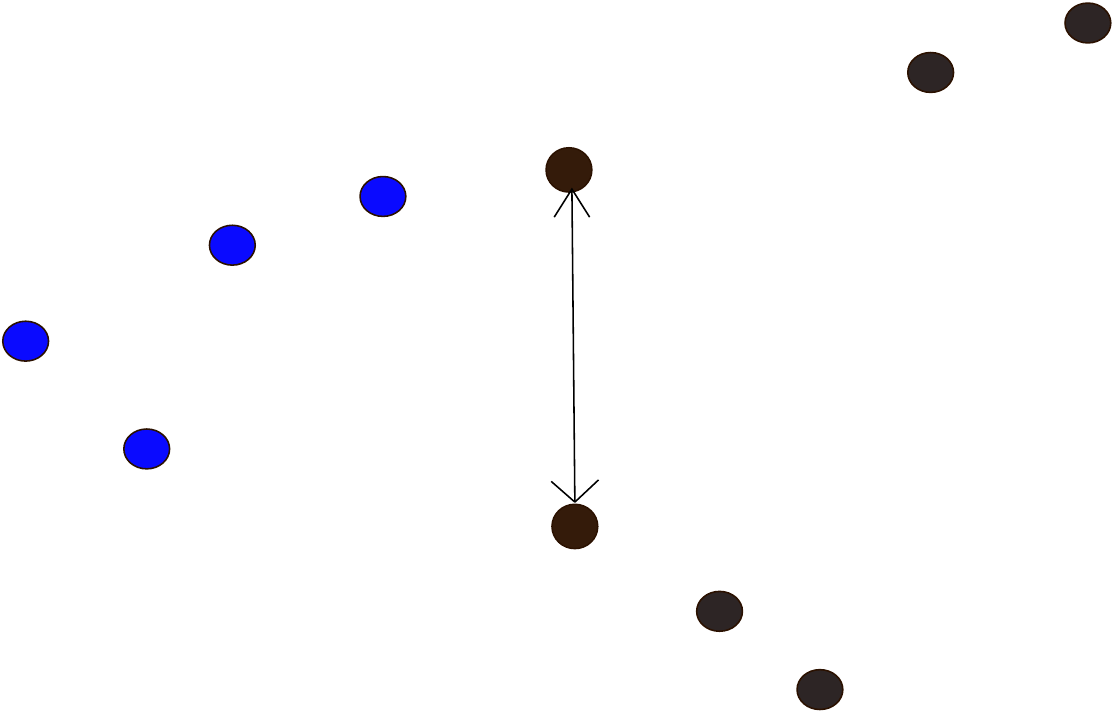}
\caption{The two possible values of the leftmost entry of $s(R)$ in $p$.}
\end{center} \end{figure}

\section{Determining $|s^{-1}(\av_n(231,312))|$.}

Let us set $f_n=|s^{-1}(\av_n(231,312))|$ for ease of notation. Set $f_0=0$.
\begin{theorem} The sequence $f_n$ satisfies the recurrence relation (\ref{recurrence}) that is satisfied by the sequence $a_n$. As $f_1=1$, this implies that $f_n=a_n$ for all $n$.
\end{theorem}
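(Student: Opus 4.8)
The plan is to mirror the proof of the previous theorem as closely as possible. First I would record the structural description of the target class: a permutation $w$ avoids $231$ and $312$ if and only if, writing $w=\alpha\, k\, \beta$ with $k$ its largest entry, the block $\beta$ is decreasing, every entry of $\alpha$ is smaller than every entry of $\beta$, and $\alpha$ itself avoids $231$ and $312$. Both directions are short: the conditions on $\beta$ and on $\alpha$ versus $\beta$ are exactly what forbids $k$ from being the central entry of a $231$ or the leading entry of a $312$, and one checks that no mixed occurrence can straddle the $\alpha/\beta$ cut precisely because $\alpha<\beta$ entrywise. Writing $p=LnR$ and using $s(p)=s(L)s(R)n$, I would then observe that since $n$ is the maximum and occupies the last position of $s(p)$, it can be neither the central entry of a $231$ nor the leading entry of a $312$; hence $s(p)$ avoids both patterns if and only if the shorter word $s(L)s(R)$ does.

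The cases in which $n$ sits in position $1$ or $n$ are immediate: one of $L,R$ is empty, the condition collapses to $s(L)\in\av$ or $s(R)\in\av$, and each contributes $f_{n-1}$, for a total of $2f_{n-1}$. For $n$ in a position $i\in[2,n-1]$ I would group the admissible pairs $(L,R)$ by their patterns $\ell\cong L$ and $r\cong R$. Since a $231$ or $312$ already present inside $s(L)$ or $s(R)$ is fatal, only pairs with $s(\ell),s(r)\in\av$ survive, and there are exactly $f_{i-1}f_{n-i}$ of these. Because $s$ preserves the underlying set of values, fixing $\ell,r$ together with the value set of $L$ determines $L$, and likewise determines $R$; so the whole computation reduces to the following counting claim, whose proof is the heart of the matter.

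The claim is that for nonempty $231,312$-avoiders $M=s(\ell)$ and $N=s(r)$ there are exactly two ways to split $\{1,\dots,|M|+|N|\}$ into value sets $S_M,S_N$ so that $M$ realized on $S_M$ followed by $N$ realized on $S_N$ avoids $231$ and $312$. One split always works, namely $S_M=\{1,\dots,|M|\}$ lying entirely below $S_N$: every entry of $M$ is then below every entry of $N$, so no $231$ or $312$ can straddle the junction, and the internal patterns are safe by hypothesis. The difficulty, and what I expect to be the main obstacle, is to prove that there is exactly one further split. Here I would peel off the global maximum $m$ together with the decreasing run that the structural description forces to follow it. If $N$ is not a decreasing word, that trailing run cannot extend back into the $M$-block, which forces $m\in S_N$ and assigns the top values to the maximum-and-tail of $N$; what remains is the identical problem for $M$ and the strictly shorter prefix-avoider of $N$, so induction on $|N|$ applies. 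If $N$ is decreasing, a direct check gives exactly two admissible placements of $m$ — inside $N$ (yielding the ``all of $M$ below $N$'' split) or at the end of $M$'s maximal block (pushing all of $N$ into the trailing run) — and none other. Since the reduction strictly shortens $N$ and always terminates at a nonempty decreasing word, the count is exactly $2$.

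Assembling these, position $i$ contributes $2f_{i-1}f_{n-i}$, and together with the $2f_{n-1}$ coming from the two extreme positions of $n$ this is precisely the recurrence (\ref{recurrence}); as $f_1=1$, it follows that $f_n=a_n$ for all $n$. I expect the delicate point to be verifying that the inductive peeling is a genuine bijection on value-splits — in particular that forcing the top values onto the tail of $N$ neither over- nor under-counts, and that the two placements in the decreasing base case are always distinct and always realizable — rather than the structural characterization of $\av(231,312)$, which should be routine.
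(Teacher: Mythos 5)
Your proof is correct, and its skeleton coincides with the paper's: decompose $p=LnR$, use $s(p)=s(L)s(R)n$, note that the terminal entry $n$ cannot take part in a $231$ or $312$, group preimages by the pattern pair $(\ell,r)$, show that each admissible pair admits exactly two value-splits, and sum to obtain recurrence (\ref{recurrence}), with the two extreme positions of $n$ contributing $2f_{n-1}$. The genuine difference is in how the central ``exactly two splits'' claim is proved. The paper identifies $\av_n(231,312)$ as the layered permutations and, from the global layer structure, explicitly writes down the two admissible splits --- either every entry of $L$ lies below every entry of $R$, or the last layer of $s(L)$ is placed just above the first layer of $s(R)$ so that the two layers merge into a single layer of $s(p)$ (see Example \ref{twonewchoices}) --- asserting that these are the only possibilities. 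You instead work with the recursive characterization of the class around its maximum ($w=\alpha k \beta$ with $\beta$ decreasing, $\alpha$ entrywise below $\beta$, $\alpha$ avoiding) and prove the count is two by induction on $|N|$: when $N$ is not decreasing, the maximum of the concatenation is forced into the $N$-block and the top values are pinned on the max-and-tail of $N$, giving a bijection between splits for $(M,N)$ and splits for $(M,N_1)$, where $N_1$ is the part of $N$ before its maximum; when $N$ is decreasing, a direct check yields exactly two splits, and they are distinct because exactly one of them places the overall maximum value in $S_M$. Your worry about the inductive peeling is resolved: once the forced top values are assigned, the avoidance condition for $M'N'$ reduces verbatim to that for $M'N_1'$, so the correspondence neither over- nor under-counts. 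What the paper's route buys is brevity and a concrete description of both preimages (the merging-layers picture); what yours buys is a complete verification that no third split exists, a point the paper disposes of with ``It follows from the description in the previous paragraph,'' leaving the uniqueness check to the reader.
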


\begin{proof} Just as in the proof of the previous theorem, we will show that there are two ways to combine two shorter permutations with the desired properties into one. 
A permutation that avoids 231 and 312 is called a {\em layered} permutation. It consists of decreasing subsequences (the layers) so that the entries increase from each layer to the next. See Figure \ref{layered} for an illustration.

\begin{figure}[htb] \label{layered}
\begin{center}\includegraphics[width=6cm]{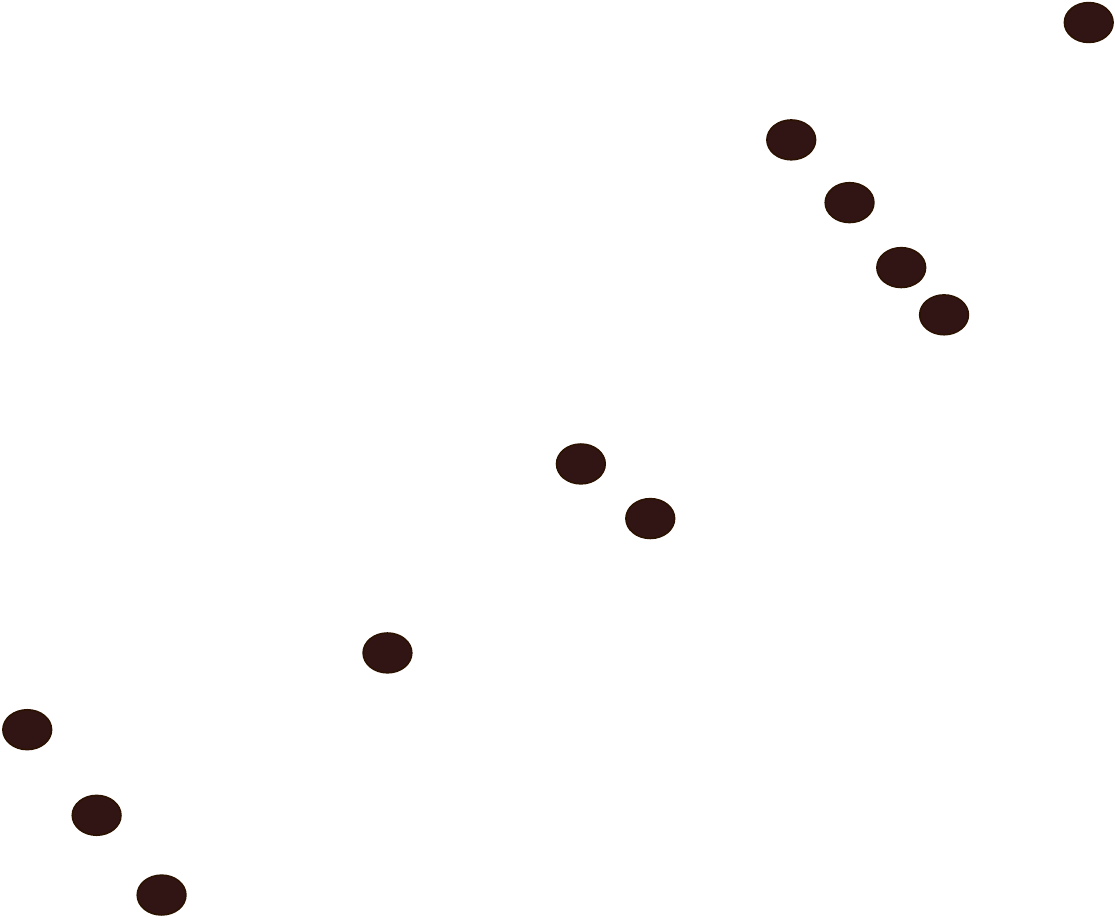}
\caption{A generic permutation avoiding 231 and 312, called a layered permutation.}
\end{center} \end{figure}

 Let $p=LnR$. It follows from the description in the previous paragraph that then $s(p)=s(L)s(R)n$ avoids both 231 and 312 if and only if $s(L)$ and $s(R)$ avoid these two patterns
and 
\begin{enumerate}
\item either all entries in $L$ are smaller than all entries in $R$, or
\item each entry of the last layer of $s(L)$ is larger than each entry of the first layer of $s(R)$, but otherwise each entry of $L$ is smaller than each entry of $R$.
\end{enumerate}

It is easy to see that in the first case, each layer of $s(L)$ and $s(R)$ will become a layer of $s(p)=s(L)s(R)n$, whereas in the second case, the last layer of $s(L)$ and the first layer
of $s(R)$ will unite to form  a new layer of $s(p)$. 
 
We claim that  for each ordered pair  $(\ell,r)$ of nonempty patterns for which $s(\ell)$ and $s(r)$ avoid 231 and 312 and $|\ell|+|r|=n-1$, there are exactly
{\em two}  permutations $p\in s^{-1}(\av_n(231,312))$ so that $\ell$ is isomorphic to the string $L$ of entries preceding $n$ in $p$ and $r$ is isomorphic to the string $R$ of entries following $n$ in $p$. This is because given $\ell$ and $r$ with the required properties, there are two possibilites for the underlying set of $\ell$. We can choose that set to be simply
the set $\{1,2,\cdots ,\ell\}$, and then all entries in $L$ will be smaller than all entries in $R$, and $s(L)s(R)n$ will be a layered permutation. This is the situation described in (1).
 Or, if the last layer of $\ell$ is of length
$\ell_1$ and the first layer of $r$ is of length $r_1$, then we can choose the underlying set of $\ell$ to be the set $\{1,2,\cdots ,\ell-\ell_1\} \cup \{\ell -\ell_1+r_1+1,\ell -\ell_1+r_1+2,
\cdots ,\ell+r_1\}$. In that case, we are in the situation described in (2).  Then the last layer of $s(L)$ and the first layer of $s(R)$ will unite to form a layer of length $\ell_1+r_1$ in $s(p)$.
 Once that choice is made, $p=LnR$ is determined, since $L$ is isomorphic to $\ell$, and $R$ is isomorphic to $r$.
 
Again, this argument is valid as long as $L$ and $R$ are not empty, so when $n$ is in position $i\in [2,n-1]$. This case contributes $2\sum_{i=2}^{n-1} f_{i-1}f_{n-i}$ permutations to 
$f_n$.

Finally, it is clear that if $p'\in s^{-1}(\av_{n-1}(231,312))$, then prepending or postpending $p'$ with $n$, we get a permutation $p\in  s^{-1}(\av_{n-1}(231,312))$. This case contributes $2f_{n-1}$ permutations to $f_n$, completing the proof. 
\end{proof}

\begin{example} \label{twonewchoices}
Let $s(\ell) =2134$ and let $s(r)=32145$.  Then there are two possibilities for $s(p)$. 
\begin{enumerate} \item If all entries in $L$ are smaller than all entries in $R$, then we have $s(p)=2 \ 1 \ 3 \ 4 \ 7 \ 6 \ 5 \ 8 \ 9 \ 10$, and
\item if the last layer of $s(L)$ consists of entries larger than the first layer of $s(R)$, then we have $s(p)=2 \ 1 \ 3 \ 7 \ 6 \ 5 \ 4 \ 8 \ 9 \ 10$.
\end{enumerate}
 See Figure 5 for an illustration.
\end{example}

\begin{figure}[htb] \label{twonewcases}
\begin{center}\includegraphics[width=7cm]{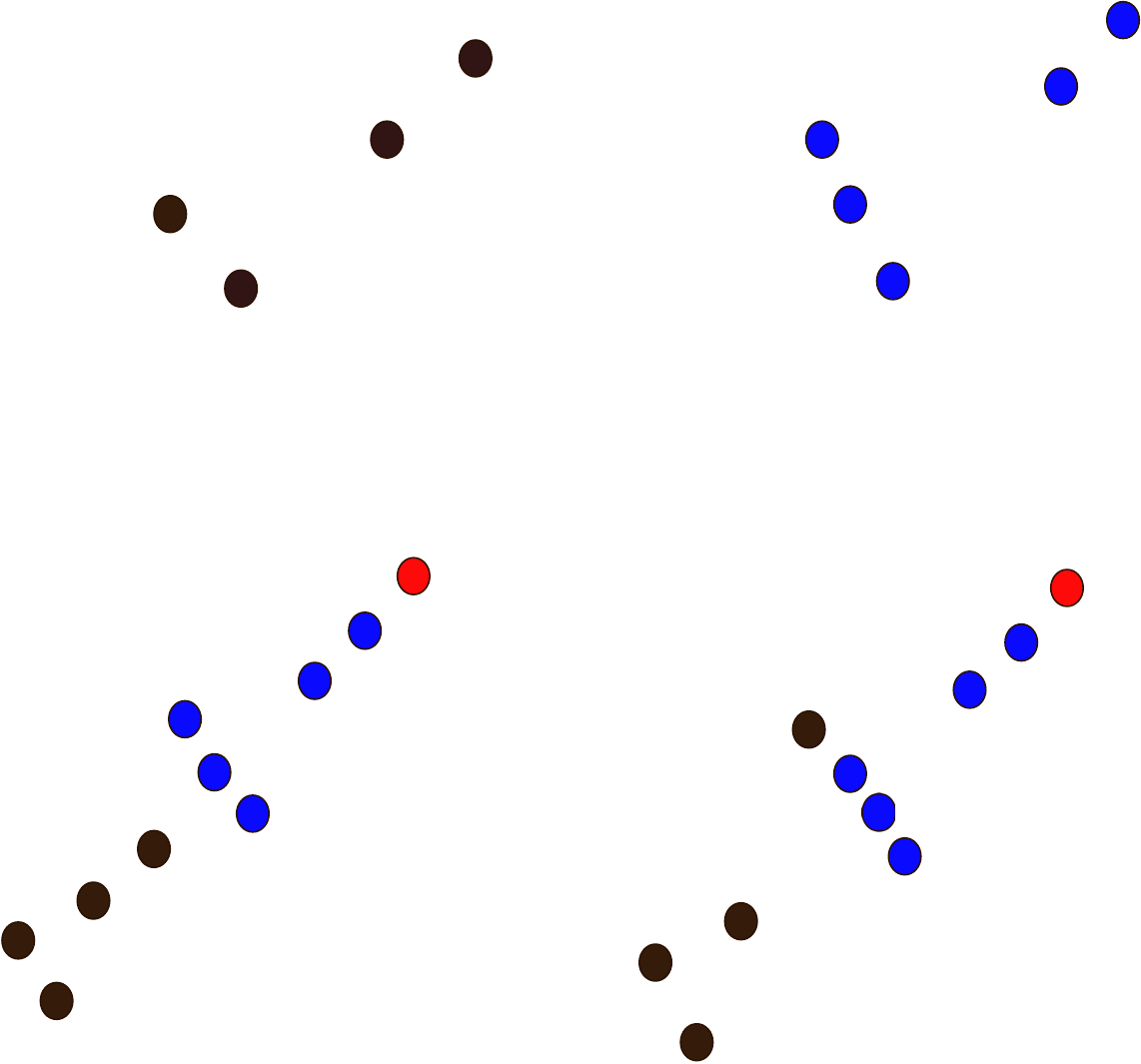}
\caption{The two ways of combining $s(L)$ and $s(R)$.}
\end{center} \end{figure}

\section{Further directions}
Now that we have found simple proofs for first two results mentioned in the Introduction, it is natural to ask for one for the third, that is, the equality $a_n=|s^{-1}(132,231)|$.
We point out that Defant \cite{enum} used the kernel method to prove that result, so it is possible that this case  is more complicated. If that is the case, that is interesting on its own, 
since that would mean that three permutation classes of identical size have stack sorting preimages of identical size, but for one of them, it is much more difficult to prove that than for the
other two.

\nocite{*}
\bibliographystyle{abbrvnat}
\bibliography{0-1-trees}
\label{sec:biblio}

\end{document}